\newtheorem{thm}{Theorem}[section]
\newtheorem{cor}[thm]{Corollary}
\newtheorem{lem}[thm]{Lemma}
\newtheorem{prop}[thm]{Proposition}
\theoremstyle{definition}
\newtheorem{rem}[thm]{Remark}
\numberwithin{equation}{section}
\def\R{\mathfrak R}
\def\E{\mathfrak E}
\def\R{\mathfrak R}
\def\rit#1{{\mbox{\rm #1}}}
\def\modx#1#2{\equiv#1\hspace{-1mm}\mod #2}
\def\nmodx#1#2{\not\equiv#1\hspace{-1mm}\mod #2}
\def\itemx#1{\item[{\rm(#1)}]}
\def\br#1{\{#1\}}
\begin{document}

\baselineskip=17pt

%%%%%%%%%%%

%% In the running head, replace first names by initials 
%% and give an abbreviation of the title.
\title[generalized $\mathbf{\lambda}$ functions]{Generators of modular function fields obtained from generalized lambda functions}
%\title[generalized $\mathbf{\lambda}$ functions]{Generalized $\lambda$ functions and the function field with respect to the group $\mathbf{\Gamma(N)}$}
\author[N. Ishii]{Noburo Ishii}
\address{
8-155@Shinomiya-Koganeduka, Yamashina-ku\\
 Kyoto,607-8022,Japan
 }
\email{Noburo.Ishii@ma2.seikyou.ne.jp}
\date{}

\begin{abstract}
We define a modular function which is a generalization of the elliptic modular lambda function. We show this function and the modular invariant function generate the modular function field with respect to the principal congruence subgroup. Further we study its values at imaginary quadratic points. 
\end{abstract}

\subjclass[2010]{Primary 11F03; Secondary 11G15}

\keywords{modular function field, generator, lambda function}

\maketitle

\section{Introduction}
 For a positive integer $N$, let $\Gamma(N)$ be the principal congruence subgroup of level $N$ of $\rit{SL}_2(\mathbf Z)$, thus,
\[
\Gamma(N)=\left\{\left. \begin{pmatrix} a & b \\ c & d \end{pmatrix}\in \rit{SL}_2(\mathbf Z)~\right |~ a-1\equiv b\equiv c \equiv 0 \mod N \right\}.
\]
We denote by $A(N)$ the modular function field with respect to $\Gamma(N)$.  For an element $\tau$ of the complex upper half plane, we denote by $L_\tau$  the lattice of $\mathbf C$ generated by $1$ and $\tau$ and by $\wp(z;L_\tau)$ the Weierstrass $\wp$-function relative to the lattice $L_\tau$. Let $e_i (i=1,2,3)$ be the 2-division points of the group $\E_\tau=\mathbf C/L_\tau$. The elliptic modular lambda function $\lambda(\tau)$ is defined by
\[
\lambda(\tau)=\frac{\wp(e_1;L_\tau)-\wp(e_3;L_\tau)}{\wp(e_2;L_\tau)-\wp(e_3;L_\tau)}.
\] The function $\lambda$ generates $A(2)$ and is used instead of the modular invariant function $j(\tau)$ to parametrize elliptic curves. Further $2^4\lambda$ is integral over $\mathbf Z[j]$ (see \cite{LA} 18, \S 6).  Note that $e_3=e_1+e_2$. In the case the genus of $A(N)$ is not $0$, thus $N\geq 6$, $A(N)$ has at least two generators. It is well known that $A(N)$ is a Galois extension over $\mathbf C (j)$ with the Galois group $\rit{SL}_2(\mathbf Z)/\{\pm E_2\}\Gamma(N)$, where $E_2$ is a unit matrix. Therefore $A(N)$ is generated by a function over $\mathbf C(j)$. Henceforth let $N\geq 2$. For the group $\E_\tau[N]$ of $N$-division points of $\E_\tau$, there exists an isomorphism $\varphi_\tau$ of the group $\mathbf Z/N\mathbf Z\oplus\mathbf Z/N\mathbf Z$ to $\E_\tau[N]$  given by $\varphi_\tau((r,s))\equiv (r\tau+s)/N \mod  L_\tau$. If $\{Q_1,Q_2\}$ is a basis of $\mathbf Z/N\mathbf Z\oplus\mathbf Z/N\mathbf Z$, then $\{\varphi_\tau(Q_1),\varphi_\tau(Q_2)\}$ is a basis of $\E_\tau[N]$. In this article, we consider a modular function associated with a basis of the group  $\E_\tau[N]$ which is a generalization of $\lambda(\tau)$, defined by
\begin{equation}\label{genlam1}
\Lambda(\tau;Q_1,Q_2)=\frac{\wp(\varphi_\tau(Q_1);L_\tau)-\wp(\varphi_\tau(Q_1+Q_2);L_\tau)}{\wp(\varphi_\tau(Q_2);L_\tau)-\wp(\varphi_\tau(Q_1+Q_2);L_\tau)}.
\end{equation}
 For $N\ne 6$, we shall show that $\Lambda(\tau;Q_1,Q_2)$ generates $A(N)$ over $\mathbf C (j)$. In the case  $N=6$, $\Lambda(\tau;Q_1,Q_2)$ is not a generator  of $A(6)$ over $\mathbf C (j)$, for any basis $\br{Q_1,Q_2}$ (see Remark \ref{remex}).  For $N$, let us define an integer $C_N$ as follows.  Put $C_2=2^4$. Let $N>2$. If $N=p^m$ is a power of a prime number $p$, then put
\[
C_N=\begin{cases}p^2 &\text{ if }p=2,3,\\
                 p   &\text{ if }p>3.
\end{cases}
\]
If $N$ is not a power of a prime number, then put $C_N=1$. We shall show that $C_N\Lambda(\tau;Q_1,Q_2)$ is integral over $\mathbf Z[j]$, and the value of $C_N\Lambda(\tau;Q_1,Q_2)$ at an imaginary quadratic point is an algebraic integer. Further if $N\ne 6$, then it generates a ray class field modulo $N$ over a Hilbert class field.  For the modular subgroups $\Gamma_1(N)$ and $\Gamma_0(N)$, we have obtained similar results by using generalized lambda functions of different types. See Remark \ref{rem41} and for more details, refer to \cite{IN} and \cite{IK}. Throughout this article, we use the following notation:\newline 
For a function $f(\tau)$ and $A=\begin{pmatrix}a&b\\c&d\end{pmatrix}\in\rit{SL}_2(\mathbf Z)$, $f[A]_2$ and $f\circ A$ represent
\[
f[A]_2=f\left(\frac{a\tau+b}{c\tau+d}\right)(c\tau+d)^{-2},~f\circ A=f\left(\frac{a\tau+b}{c\tau+d}\right).
\]
The greatest common divisor of $a,b\in\mathbf Z$ is denoted by $\rit{GCD}(a,b)$. For an integral domain $R$,  $R((q))$ represents the power series ring of a variable $q$ with coefficients in $R$ and $R[[q]]$ is a subring of $R((q))$ of power series of non-negative order. For $f,g\in R((q))$ and a positive integer $m$, the relation $f-g\in q^mR[[q]]$ is denoted by $f\equiv g \mod q^m$.

\section{Auxiliary results}
Let $N$ be an integer greater than $1$.  Put $q=\rit{exp}(2\pi i\tau/N)$ and $\zeta=\exp(2\pi i/N)$. For an integer $x$, let
$\{x\}$ and $\mu (x)$ be the integers defined  by the following conditions:
\[
\begin{split}
&0\le \{x\}\le \frac N2,\quad \mu (x)=\pm 1,\\
&\begin{cases}\mu(x)=1\qquad &\text{if } x\modx {0,N/2}N,\\
             x\equiv \mu (x)\{x\} \mod N\qquad&\text{otherwise.}
\end{cases}
\end{split}
\]

For a pair of integers $(r,s)$ such that $(r,s)\not\equiv (0,0) \mod N$, consider a function 
$$E(\tau;r,s)=\frac 1{(2\pi i)^2}\wp(\frac{r\tau +s}N;L_{\tau})-1/12$$
on the complex upper half plane. Clearly, 
\begin{equation}\label{fund}
\begin{split}
&E(\tau;r+aN,s+bN)=E(\tau;r,s) \text{ for any integers $a,b$},\\
&E(\tau,r,s)=E(\tau,-r,-s),
\end{split}
\end{equation}
since $\wp(z;L_\tau)$ is an even function. It follows that $E(\tau;r,s)$ is a modular form of weight $2$ with respect to $\Gamma (N)$  from the transformation formula:
\begin{equation}\label{transf}
E(\tau;r,s)[A]_2=E(\tau;ar+cs,br+ds),\text{ for } A=\begin{pmatrix}a & b \\ c & d \end{pmatrix}\in \rit{SL}_2(\mathbb Z).
\end{equation}  
  Put $\omega=\zeta^{\mu (r)s}$ and $u=\omega q^{\{r\}}$. From proof of Lemma 1 of \cite{II}, the $q$-expansion of $E(\tau;r,s)$ is obtained as follows:
{\small
\begin{equation}\label{eq1}
E(\tau ;r,s)=
\begin{cases}\displaystyle
\frac{\omega}{(1-\omega)^2}+\sum_{m=1}^{\infty}\sum_{n=1}^{\infty}n(\omega^n+\omega^{-n}-2)q^{mnN}&\text{if }\{r\}=0,\\
\displaystyle\sum_{n=1}^{\infty}n u^n+\displaystyle\sum_{m=1}^{\infty}\sum_{n=1}^{\infty}n(u^n+u^{-n}-2)q^{mnN}&\text{otherwise}.
\end{cases}
\end{equation}
}
Therefore $E(\tau;r,s)\in \mathbf Q(\zeta)[[q]]$. For an integer $\ell$ prime to $N$, let $\sigma_\ell$ be the automorphism of $\mathbf Q(\zeta)$ defined by $\zeta^{\sigma_\ell}=\zeta^\ell$. On a power series $f=\sum_ma_mq^m$ with $a_m\in\mathbf Q(\zeta)$, $\sigma_\ell$ acts  by $f^{\sigma_\ell}=\sum_m a_m^{\sigma_\ell}q^m$. By \eqref{eq1},
\begin{equation}\label{isom}
E(\tau,r,s)^{\sigma_\ell}=E(\tau,r,s\ell).
\end{equation}
If $(r_2,s_2)$ is a pair of integers such that $(r_2,s_2)\nmodx{(0,0),(r_1,s_1),(-r_1,-s_1)}N$, then  $E(\tau;r_1,s_1)-E(\tau;r_2,s_2)$ is not $0$ and has neither zeros nor poles on the complex upper half plane, because the function $\wp(z;L_\tau)-\wp((r_2\tau+s_2)/N;L_\tau)$ has zeros (resp.poles) only at the points $z\equiv \pm (r_2\tau+s_s)/N$ (resp.$0$)$\mod L_\tau$. The next lemma and propositions are required in the following sections.

\begin{lem}\label{lem2} Let $k\in\mathbf Z$ and $\delta=\rit{GCD}(k,N)$. 
\begin{enumerate}
\itemx i For an integer $\ell$, if $\ell$ is divisible by $\delta$, then $(1-\zeta^\ell)/(1-\zeta^k)\in\mathbf Z[\zeta]$.
\itemx {ii} If $N/\delta$ is not a power of a prime number, then $1-\zeta^k$ is a unit.
\end{enumerate}
\end{lem}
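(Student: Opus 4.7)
\medskip

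\noindent For (i) my plan is to realize the quotient as a finite geometric series in a primitive $(N/\delta)$-th root of unity. Since $\rit{GCD}(k,N)=\delta$, write $k=\delta k_0$ with $\rit{GCD}(k_0,N/\delta)=1$, and set $\eta=\zeta^\delta$; then $\eta$ is a primitive $(N/\delta)$-th root of unity with $\zeta^k=\eta^{k_0}$. Writing $\ell=\delta\ell_0$ gives $\zeta^\ell=\eta^{\ell_0}$. Because $k_0$ is a unit modulo $N/\delta$, I can choose a non-negative integer $m$ with $k_0 m\equiv\ell_0\mod N/\delta$, and then $\eta^{k_0 m}=\eta^{\ell_0}$, so that
\[
\frac{1-\zeta^\ell}{1-\zeta^k}=\frac{1-\eta^{k_0 m}}{1-\eta^{k_0}}=\sum_{i=0}^{m-1}\eta^{ik_0}\in\mathbf Z[\eta]\subset\mathbf Z[\zeta].
\]

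For (ii) I first use (i) to pass from $1-\zeta^k$ to $1-\zeta^\delta$. Applying (i) directly with $\ell=\delta$ gives $(1-\zeta^\delta)/(1-\zeta^k)\in\mathbf Z[\zeta]$. Applying (i) again, but this time with $\delta$ playing the role of $k$ and $k$ playing the role of $\ell$---a valid instance, since $\rit{GCD}(\delta,N)=\delta$ and $\delta\mid k$---yields the reciprocal $(1-\zeta^k)/(1-\zeta^\delta)\in\mathbf Z[\zeta]$. Hence $1-\zeta^k$ and $1-\zeta^\delta$ are associates in $\mathbf Z[\zeta]$, and it suffices to prove that $1-\eta$ is a unit of $\mathbf Z[\zeta]$, where $\eta=\zeta^\delta$ is a primitive $M$-th root of unity with $M=N/\delta$.

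To finish, I would use the fact that the norm from $\mathbf Q(\eta)$ down to $\mathbf Q$ satisfies $N_{\mathbf Q(\eta)/\mathbf Q}(1-\eta)=\Phi_M(1)$, the value at $1$ of the $M$-th cyclotomic polynomial. Under the hypothesis that $M$ is not a prime power, one has the classical identity $\Phi_M(1)=1$, which follows by induction on the number of distinct prime divisors of $M$ from the relation $\Phi_{pn}(x)=\Phi_n(x^p)/\Phi_n(x)$ valid for primes $p\nmid n$. An algebraic integer whose absolute norm equals $1$ is a unit in its ring of integers, so $1-\eta$ is a unit of $\mathbf Z[\eta]\subset\mathbf Z[\zeta]$.

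The only real obstacle is the gcd bookkeeping in (i)---ensuring that $\rit{GCD}(k_0,N/\delta)=1$ so that $k_0$ is invertible mod $N/\delta$, and that a non-negative $m$ can indeed be chosen so the geometric sum truly terminates. Once (i) is in hand, (ii) reduces cleanly via the associate argument to the well-known evaluation $\Phi_M(1)=1$ for $M$ not a prime power.
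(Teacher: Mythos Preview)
Your argument for (i) is correct and is essentially the paper's proof spelled out: the paper simply observes that $\delta\mid\ell$ forces $\ell\equiv mk\pmod N$ for some $m$, whence $\zeta^\ell=\zeta^{mk}$ and $1-\zeta^k\mid 1-\zeta^\ell$; your geometric-series display makes this explicit.

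For (ii) your reduction of $1-\zeta^k$ to its associate $1-\zeta^\delta$ via two applications of (i) is fine, and the norm computation $N_{\mathbf Q(\eta)/\mathbf Q}(1-\eta)=\Phi_M(1)=1$ (for $M=N/\delta$ not a prime power) is correct and classical. The paper, however, takes a slightly more hands-on route that avoids invoking $\Phi_M(1)$: it picks two distinct prime divisors $p_1,p_2$ of $N/\delta$, notes that $\delta\mid N/p_i$, and applies (i) to get $(1-\zeta^\delta)\mid(1-\zeta^{N/p_i})$. Since $\zeta^{N/p_i}$ is a primitive $p_i$-th root of unity, $1-\zeta^{N/p_i}$ divides $p_i$ in $\mathbf Z[\zeta]$, so $1-\zeta^\delta$ divides both $p_1$ and $p_2$; as these are coprime rational integers, $1-\zeta^\delta$ is a unit. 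Your norm argument is the cleaner textbook route and generalizes at once, while the paper's argument is more self-contained, reusing only part (i) and the single fact that $1-\zeta_p$ divides $p$.
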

\begin{proof} If $\ell$ is divisible by $\delta$, then there exist an integer $m$ such that $\ell\modx{mk}{N}$. 
Therefore $\zeta^\ell=\zeta^{mk}$ and $(1-\zeta^\ell)$ is divisible by $(1-\zeta^k)$. This shows (i). Let $p_i~(i=1,2)$ be distinct prime factors of $N/\delta$. 
Since $N/p_i=\delta (N/\delta p_i)$, $1-\zeta^{N/p_i}$ is divisible by $1-\zeta^\delta$. Therefore $p_i~(i=1,2)$ is divisible by $1-\zeta^\delta$. This implies that $1-\zeta^\delta$ is a unit. Because of $\rit{GCD}(k/\delta,N/\delta)=1$, $1-\zeta^k$ is also a unit .
\end{proof}
The following propositions are immediate results of \eqref{eq1}.
\begin{prop}\label{prop1} Let $(r_i,s_i)~ (i=1,2)$ be as above. Assume that $\{r_1\}\leq \{r_2\}$. Put $\omega_i=\zeta^{\mu (r_i)s_i}$ and $u_i=\omega_i q^{\{r_i\}}$.
\begin{enumerate}
\itemx i If $\{r_1\}\ne 0$, then 
\[
E(\tau;r_1,s_1)-E(\tau;r_2,s_2)\equiv \sum_{n=1}^{N-1} n(u_1^n-u_2^n)+u_1^{-1}q^N-u_2^{-1}q^N \mod q^N.
\]
\itemx{ii} If $\{r_1\}=0$ and $\{r_2\}\ne 0$, then
\[E(\tau;r_1,s_1)-E(\tau;r_2,s_2)\equiv \frac{\omega_1}{(1-\omega_1)^2}-\sum_{n=1}^{N-1} nu_2^n-u_2^{-1}q^N \mod q^N.
\]
\itemx {iii} If $\{r_1\}=\{r_2\}=0$, then 
\[
E(\tau;r_1,s_1)-E(\tau;r_2,s_2)\equiv \frac{(\omega_1-\omega_2)(1-\omega_1\omega_2)}{(1-\omega_1)^2(1-\omega_2)^2}~\mod q^N.
\]
\end{enumerate}
\end{prop}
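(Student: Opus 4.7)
The plan is to apply the $q$-expansion \eqref{eq1} to each factor and truncate modulo $q^N$, the only algebraic content being a short simplification in case~(iii). For the generic case $\{r\}\ne 0$, the single sum $\sum_{n\ge 1}nu^n$ in \eqref{eq1} has $n$-th term of $q$-order $n\{r\}\ge n$, so truncating at $n=N-1$ costs nothing modulo $q^N$. In the double sum $\sum_{m,n\ge 1}n(u^n+u^{-n}-2)q^{mnN}$, the three pieces contribute $q$-orders $mnN+n\{r\}$, $mnN-n\{r\}$, and $mnN$ respectively; only the middle one can drop below $N$, and the bound $\{r\}\le N/2$ together with $m,n\ge1$ forces $(m,n)=(1,1)$ as the unique surviving index, producing the single ``boundary'' term $u^{-1}q^N=\omega^{-1}q^{N-\{r\}}$. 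Hence
\[
E(\tau;r,s)\equiv\sum_{n=1}^{N-1}nu^n+u^{-1}q^N\pmod{q^N},
\]
and subtracting this identity for $i=1,2$ establishes~(i).

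For~(ii), when $\{r_1\}=0$ the first branch of \eqref{eq1} applies, and every $q^{mnN}$ term there has $q$-order $\ge N$, so $E(\tau;r_1,s_1)\equiv\omega_1/(1-\omega_1)^2\pmod{q^N}$. Combining this with the expansion obtained above for $(r_2,s_2)$ yields the formula claimed in~(ii). If in addition $\{r_2\}=0$, the same reduction applies to the second factor, and~(iii) amounts to the algebraic identity
\[
\frac{\omega_1}{(1-\omega_1)^2}-\frac{\omega_2}{(1-\omega_2)^2}=\frac{\omega_1(1-\omega_2)^2-\omega_2(1-\omega_1)^2}{(1-\omega_1)^2(1-\omega_2)^2},
\]
whose numerator collapses, after expansion and cancellation of the two $-2\omega_1\omega_2$ terms, to $(\omega_1-\omega_2)(1-\omega_1\omega_2)$.

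The only step requiring real care is the identification in case~(i) of $(m,n)=(1,1)$ as the sole surviving index in the $u^{-n}$ contribution to the double sum. This depends essentially on the convention $0\le\{r\}\le N/2$: for $m\ge2$ one has $mN-\{r\}\ge 3N/2>N$, ruling out any $n\ge1$, while for $m=1$ the bound $N-\{r\}\ge N/2$ forces $n=1$. Everything else is straightforward bookkeeping of $q$-orders together with the short rational-function identity used for~(iii).
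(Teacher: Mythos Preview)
Your argument is correct and is exactly the intended derivation: the paper does not give a proof but simply declares the proposition an ``immediate result'' of the $q$-expansion \eqref{eq1}, and what you wrote is precisely the routine truncation of that expansion modulo $q^N$, together with the elementary rational-function identity for case~(iii). In particular your isolation of $(m,n)=(1,1)$ in the $u^{-n}$ contribution via $0<\{r\}\le N/2$ is the only point that needed checking, and it is handled correctly.
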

\begin{prop}\label{prop2} Let the assumption and the notation be the same as in Proposition \ref{prop1}. Then
\[E(\tau;r_1,s_1)-E(\tau;r_2,s_2)=\theta q^{\{r_1\}}(1+qh(q)),\]
where $h(q)\in\mathbf Z[\zeta][[q]]$ and $\theta$ is a non-zero element of $\mathbf Q(\zeta)$ defined as follows.
In the case of $\{r_1\}=\{r_2\}$,
\[
\theta =\begin{cases}\omega_1-\omega_2\quad&\text{if }\{r_1\}\ne 0,N/2,\\
          -\displaystyle\frac{(\omega_1-\omega_2)(1-\omega_1\omega_2)}{\omega_1\omega_2}\quad&\text{if }\{r_1\}=N/2,\\
\displaystyle\frac{(\omega_1-\omega_2)(1-\omega_1\omega_2)}{(1-\omega_1)^2(1-\omega_2)^2}\quad&\text{if }\{r_1\}=0.
\end{cases}
\]
In the case of $\{r_1\}<\{r_2\}$,
\[
\theta =\begin{cases}\displaystyle \omega_1\quad&\text{if }\{r_1\}\ne 0,\\
\displaystyle\frac{\omega_1}{(1-\omega_1)^2}\quad&\text{if }\{r_1\}=0.
\end{cases}
\]
\end{prop}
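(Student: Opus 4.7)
The plan is a direct case analysis on the values of $\{r_1\}$ and $\{r_2\}$: I would read off the coefficient of $q^{\{r_1\}}$ from Proposition \ref{prop1}, identify it as $\theta$, and then verify separately that $\theta\ne 0$ and that the quotient lies in $\mathbf{Z}[\zeta][[q]]$.

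In the case $\{r_1\}<\{r_2\}$ with $\{r_1\}\ne 0$, Proposition \ref{prop1}(i) shows the only $q^{\{r_1\}}$-contribution is $u_1=\omega_1 q^{\{r_1\}}$, since $u_2$, the higher powers $u_i^n$ ($n\ge 2$), and $u_i^{-1}q^N$ all lie at strictly higher orders (here $\{r_1\}<N/2$, so $N-\{r_i\}>\{r_1\}$), giving $\theta=\omega_1$. When $\{r_1\}=0$ and $\{r_2\}\ne 0$, Proposition \ref{prop1}(ii) places $\omega_1/(1-\omega_1)^2$ directly in the constant position while the $u_2^n$-terms have positive order, so $\theta=\omega_1/(1-\omega_1)^2$. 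When $\{r_1\}=\{r_2\}=0$, Proposition \ref{prop1}(iii) supplies $\theta$ immediately. When $\{r_1\}=\{r_2\}\ne 0, N/2$, the only $q^{\{r_1\}}$-contribution in Proposition \ref{prop1}(i) is $u_1-u_2=(\omega_1-\omega_2)q^{\{r_1\}}$, giving $\theta=\omega_1-\omega_2$.

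The main obstacle is the case $\{r_1\}=\{r_2\}=N/2$, where two different contributions in Proposition \ref{prop1}(i) land at the same order $q^{N/2}$: the $n=1$ term $u_1-u_2 = (\omega_1-\omega_2)q^{N/2}$ and the tail $u_1^{-1}q^N - u_2^{-1}q^N = (\omega_1^{-1}-\omega_2^{-1})q^{N/2}$ (all other contributions fall at order $\ge q^N$). Their sum simplifies as
\[
(\omega_1-\omega_2)+(\omega_1^{-1}-\omega_2^{-1}) = (\omega_1-\omega_2)\Bigl(1-\tfrac{1}{\omega_1\omega_2}\Bigr)=-\frac{(\omega_1-\omega_2)(1-\omega_1\omega_2)}{\omega_1\omega_2},
\]
matching the stated formula for $\theta$.

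Non-vanishing of $\theta$ follows from the standing hypothesis $(r_2,s_2)\nmodx{(0,0),\pm(r_1,s_1)}N$: it rules out $\omega_1=\omega_2$ whenever $\{r_1\}=\{r_2\}$, forbids $\omega_1\omega_2=1$ in the cases $\{r_1\}=\{r_2\}\in\{0,N/2\}$ (where this factor appears in $\theta$), and excludes $\omega_i=1$ when $\{r_i\}=0$. For $h(q)\in\mathbf{Z}[\zeta][[q]]$, I would argue case by case: when $\{r_1\}=\{r_2\}$, one factors $\omega_1-\omega_2$ out of each $u_1^n-u_2^n$ via $\omega_1^n-\omega_2^n=(\omega_1-\omega_2)\sum_{j=0}^{n-1}\omega_1^{n-1-j}\omega_2^j$, so the cofactors lie in $\mathbf{Z}[\zeta]$; when $\theta$ carries a denominator such as $(1-\omega_i)^2$ or $\omega_1\omega_2$, the subsequent coefficients (explicit polynomials in $\omega_1,\omega_2$ read off from \eqref{eq1}) absorb it by direct expansion. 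These last verifications are routine power-series bookkeeping.
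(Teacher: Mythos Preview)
Your proposal is correct and follows exactly the approach the paper intends: the paper itself gives no proof, stating only that Propositions~\ref{prop1} and \ref{prop2} are ``immediate results of \eqref{eq1}''. Your case analysis, including the identification of the collision at order $q^{N/2}$ when $\{r_1\}=\{r_2\}=N/2$ and the factorization $(\omega_1-\omega_2)+(\omega_1^{-1}-\omega_2^{-1})=-\dfrac{(\omega_1-\omega_2)(1-\omega_1\omega_2)}{\omega_1\omega_2}$, is precisely the computation one carries out when unpacking that ``immediate''; the integrality of $h(q)$ in the boundary cases $\{r_1\}=\{r_2\}\in\{0,N/2\}$ can be cleaned up by noting that $E(\tau;r,s)=E(\tau;r,-s)$ there, so every coefficient of the difference is a $\mathbf Z$-polynomial in $\omega_i+\omega_i^{-1}$ and hence divisible by $(\omega_1+\omega_1^{-1})-(\omega_2+\omega_2^{-1})=\theta$ (up to a unit).
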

\section{Generalized lambda functions}
For a basis $\{Q_1,Q_2\}$ of the group $\mathbf Z/N\mathbf Z\oplus\mathbf Z/N\mathbf Z$, let $\Lambda(\tau;Q_1,Q_2)$ be the function defined by \eqref{genlam1}.
 Henceforth, for an integer $k$ prime to $N$, the function $\Lambda(\tau;(1,0),(0,k))$ is denoted by $\Lambda_k(\tau)$ to simplify the notation, thus, 
\begin{equation}\label{lam1}
\begin{split}
\Lambda_k(\tau)&=\frac{\wp(\tau/N;L_\tau)-\wp((\tau+k)/N;L_\tau)}{\wp(k/N;L_\tau)-\wp((\tau+k)/N;L_\tau)}\\
&=\frac{E(\tau,1,0)-E(\tau;1,k)}{E(\tau,0,k)-E(\tau;1,k)}.
\end{split}
\end{equation}

\begin{prop}\label{prop30}
Let $\{Q_1,Q_2\}$ be a basis of the group $\mathbf Z/N\mathbf Z\oplus\mathbf Z/N\mathbf Z$. Then there exist an integer $k$ prime to $N$ and a matrix $A\in\rit{SL}_2(\mathbf Z)$ such that 
\[\Lambda(\tau;Q_1,Q_2)=\Lambda_{k}\circ A.\]
\end{prop}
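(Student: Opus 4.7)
Write $Q_1 = (a_1, b_1)$ and $Q_2 = (a_2, b_2)$ in $\mathbf Z/N\mathbf Z \oplus \mathbf Z/N\mathbf Z$. Since $\{Q_1, Q_2\}$ is a basis of this free module, the determinant $\delta := a_1 b_2 - a_2 b_1$ is a unit modulo $N$. The plan is to produce an integer $k$ prime to $N$ and a matrix $A = \begin{pmatrix} a & b \\ c & d \end{pmatrix} \in \rit{SL}_2(\mathbf Z)$ satisfying
\[
(a, b) \equiv (a_1, b_1), \quad (kc, kd) \equiv (a_2, b_2) \pmod N.
\]
Granted such $k$ and $A$, one also has $(a+kc, b+kd) \equiv (a_1+a_2, b_1+b_2) \pmod N$. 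Applying the transformation formula \eqref{transf} to each of the three $E$-values that define $\Lambda_k$ (the weight-$2$ factors $(c\tau+d)^2$ cancelling in the ratio) then yields
\[
\Lambda_k \circ A = \frac{E(\tau; a, b) - E(\tau; a+kc, b+kd)}{E(\tau; kc, kd) - E(\tau; a+kc, b+kd)},
\]
which by the periodicity \eqref{fund} equals $\Lambda(\tau; Q_1, Q_2)$.

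To exhibit $k$ and $A$, I would first choose any integer $k$ with $k \equiv \delta \pmod N$; because $\delta$ is a unit modulo $N$, every such $k$ is automatically coprime to $N$. Next, pick integer lifts $(\tilde a, \tilde b)$ of $(a_1, b_1)$ and $(\tilde c, \tilde d)$ of $k^{-1}(a_2, b_2) \bmod N$. A direct computation then gives
\[
\tilde a\,\tilde d - \tilde b\,\tilde c \equiv k^{-1}(a_1 b_2 - a_2 b_1) = k^{-1}\delta \equiv 1 \pmod N,
\]
so the mod-$N$ reduction of the matrix with rows $(\tilde a, \tilde b)$ and $(\tilde c, \tilde d)$ already lies in $\rit{SL}_2(\mathbf Z/N\mathbf Z)$. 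The classical surjectivity of $\rit{SL}_2(\mathbf Z) \twoheadrightarrow \rit{SL}_2(\mathbf Z/N\mathbf Z)$ supplies the desired $A \in \rit{SL}_2(\mathbf Z)$, whose rows by construction satisfy the required congruences.

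The only substantive ingredient is the surjectivity onto $\rit{SL}_2(\mathbf Z/N\mathbf Z)$, which is standard: one perturbs the first row by multiples of $N$ to make it primitive in $\mathbf Z^2$ (possible because the triple $(\tilde a, \tilde b, N)$ has gcd~$1$), completes it to an $\rit{SL}_2(\mathbf Z)$-matrix via B\'ezout, and then shifts the second row by an integer multiple of the first so that it matches $(\tilde c, \tilde d) \bmod N$. Once this lemma is in hand, the rest of the proof---the bookkeeping with \eqref{transf} and \eqref{fund}---is entirely routine.
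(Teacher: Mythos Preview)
Your proof is correct and follows essentially the same approach as the paper's. The paper packages the argument more tersely---writing the basis as the rows of a matrix $B\in\rit{GL}_2(\mathbf Z/N\mathbf Z)$ and asserting that $B\equiv\begin{pmatrix}1&0\\0&k\end{pmatrix}A\bmod N$ for some $A\in\rit{SL}_2(\mathbf Z)$---while you unpack this explicitly (your $\delta$ is $\det B$, your choice $k\equiv\delta$ is the same $k$, and your appeal to the surjectivity $\rit{SL}_2(\mathbf Z)\twoheadrightarrow\rit{SL}_2(\mathbf Z/N\mathbf Z)$ is what the paper hides behind ``it is easy to see'').
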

\begin{proof}
Each basis $\{Q_1,Q_2\}$ of $\mathbf Z/N\mathbf Z\oplus\mathbf Z/N\mathbf Z$ is given by $\{(1,0)B,(0,1)B\}$ for $B\in \rit{GL}_2(\mathbf Z/N\mathbf Z)$. 
It is easy to see that $\displaystyle B\equiv\begin{pmatrix}1&0\\0&k\end{pmatrix}A \mod N$, for an integer $k$ prime to $N$ and $A=\begin{pmatrix}a & b \\ c & d \end{pmatrix}\in \rit{SL}_2(\mathbb Z)$. Therefore $Q_1\equiv (a,b),Q_2\equiv (ck,dk) \mod N$. Since 
\[\Lambda_k(\tau)=\frac{E(\tau,1,0)-E(\tau;1,k)}{E(\tau,0,k)-E(\tau;1,k)},\]
by \eqref{transf}
\[\Lambda_k\circ A=\frac{E(\tau,a,b)-E(\tau;a+ck,b+dk)}{E(\tau,ck,dk)-E(\tau;a+ck,b+dk)}=\Lambda(\tau; Q_1,Q_2).\]
\end{proof}

Let $A(N)_{\mathbf Q(\zeta)}$ be the subfield of $A(N)$ consisted of all modular functions having Fourier coefficients in $\mathbf Q(\zeta)$. 
%Since 
%\begin{equation}\label{lam1}
%\Lambda(\tau; Q_1,Q_2)=\Lambda_k(\tau)\circ A=\frac{E(\tau,a,b)-E(\tau;a+ck,b+dk)}{E(\tau,ck,dk)-E(\tau;a+ck,b+dk)},
%\end{equation}
By \eqref{eq1}, 
\begin{equation}~\label{rationality}
\Lambda(\tau,Q_1,Q_2)\in A(N)_{\mathbf Q(\zeta)}.
\end{equation}
Theorem 3 of Chapter 6 of \cite{LA} shows that $A(N)_{\mathbf Q(\zeta)}$ is a Galois extension over $\mathbf Q(\zeta)(j)$ with Galois group $\rit{SL}_2(\mathbf Z)/\Gamma(N)\{\pm E_2\}$. 
\begin{prop}\label{th1} Let $N\ne 6$ and let $k$ be an integer prime to $N$. Then
 \[A(N)_{\mathbf Q(\zeta)}=\mathbf Q(\zeta)(\Lambda_k,j).\] 
\end{prop}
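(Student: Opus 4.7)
The plan is to invoke Galois theory: since $A(N)_{\mathbf Q(\zeta)}$ is Galois over $\mathbf Q(\zeta)(j)$ with group $G=\mathrm{SL}_2(\mathbf Z)/\{\pm E_2\}\Gamma(N)$ acting via $f\mapsto f\circ A$, the equality $\mathbf Q(\zeta)(\Lambda_k,j)=A(N)_{\mathbf Q(\zeta)}$ is equivalent to the triviality of the stabilizer of $\Lambda_k$ in $G$. Thus it suffices to prove that if $A=\begin{pmatrix}a&b\\c&d\end{pmatrix}\in\mathrm{SL}_2(\mathbf Z)$ satisfies $\Lambda_k\circ A=\Lambda_k$, then $A\in\{\pm E_2\}\Gamma(N)$.

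To analyze the equation, I would compute $\Lambda_k\circ A$ explicitly via the transformation law \eqref{transf}, as in the proof of Proposition~\ref{prop30}:
\[
\Lambda_k\circ A=\frac{E(\tau;a,b)-E(\tau;a+ck,b+dk)}{E(\tau;ck,dk)-E(\tau;a+ck,b+dk)}.
\]
Proposition~\ref{prop2} writes each of the four differences of $E$-functions in the normal form $\theta\,q^{\{r_1\}}(1+qh(q))$ with $h\in\mathbf Z[\zeta][[q]]$ and $\theta\in\mathbf Q(\zeta)^\times$ an explicit function of the residues and roots of unity. The first step is to match the leading $q$-orders on the two sides: the denominator of $\Lambda_k$ has $q$-order $0$ and the numerator has $q$-order $1$, so the same must hold for $\Lambda_k\circ A$.

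The denominator condition $\min(\{ck\},\{a+ck\})=0$, together with $\det A=1$, rules out the branch $\{a+ck\}=0$ and forces $c\equiv 0\pmod N$ (using $\rit{GCD}(k,N)=1$); the numerator condition then yields $\{a\}=1$, i.e.\ $a\equiv\pm 1\pmod N$, and we may assume $a\equiv 1\pmod N$ after replacing $A$ by $-A$. Matching the leading coefficients of the denominators (Proposition~\ref{prop2} in the case $\{r_1\}<\{r_2\},\{r_1\}=0$) gives $\zeta^{dk}/(1-\zeta^{dk})^2=\zeta^k/(1-\zeta^k)^2$, which factors as $(\zeta^{dk}-\zeta^k)(1-\zeta^{k+dk})=0$; combined with $ad\equiv 1\pmod N$ this singles out $d\equiv 1\pmod N$. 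The analogous identity $\zeta^b(1-\zeta^{dk})=1-\zeta^k$ for the numerators then forces $b\equiv 0\pmod N$, giving $A\equiv E_2\pmod N$, as required.

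The main obstacle is the case analysis itself: one must carry out the above through all the branches of Proposition~\ref{prop2}, especially the degenerate residues $\{r\}=0$ and $\{r\}=N/2$ that arise for small $N$, and in addition account for the exclusion $N=6$. For $N=6$ an exceptional coincidence in the arithmetic of $\mathbf Z[\zeta_6]$ (e.g.\ the relation $\zeta_6+\zeta_6^{-1}=1$) admits a further solution of the leading-coefficient system that higher-order $q$-coefficient comparisons cannot discard, as recorded in the remark referenced in the introduction; for every other $N\ge 2$ the scheme above cleanly terminates at $A\in\{\pm E_2\}\Gamma(N)$, proving the claim.
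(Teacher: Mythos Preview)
Your Galois-theoretic reduction is correct and matches the paper's approach, and the endgame (once $c\equiv 0$ and $a\equiv\pm 1\pmod N$) is fine. The gap is in the middle: you assert that ``the denominator of $\Lambda_k$ has $q$-order $0$ and the numerator has $q$-order $1$, so the same must hold for $\Lambda_k\circ A$.'' But $\Lambda_k\circ A=\Lambda_k$ only forces the \emph{ratio} to have $q$-order $1$; there is no reason the particular numerator $E(\tau;a,b)-E(\tau;a+ck,b+dk)$ and denominator $E(\tau;ck,dk)-E(\tau;a+ck,b+dk)$ should individually have orders $1$ and $0$. The correct constraint is only
\[
\min(\{a\},\{a+ck\})=\min(\{ck\},\{a+ck\})+1,
\]
and this admits solutions with $c\not\equiv 0\pmod N$: for instance $0<\{ck\}=\{a\}-1<\{a\}<\{a+ck\}$ satisfies it with both orders strictly positive. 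Your ``denominator condition $\min(\{ck\},\{a+ck\})=0$'' is therefore unjustified, and the quick deduction $c\equiv 0\pmod N$ collapses.

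This is not a cosmetic issue. In the paper the bulk of the work is precisely the elimination of the three residual cases $\{a\}<\{a+c\}$, $\{a\}>\{a+c\}$, $\{a\}=\{a+c\}$ (after reducing to $k=1$) by comparing leading and next-to-leading $q$-coefficients via Propositions~\ref{prop1} and~\ref{prop2}. The first two cases yield $|1-\zeta|=1$, which singles out $N=6$; the third leads (for $N>5$) to $|3+\zeta^2|=1$, which is impossible. So the $N=6$ exclusion is not a vague ``arithmetic coincidence'' appearing at the level of your leading-coefficient identities for the case $c\equiv 0$; it arises earlier, in exactly the branches your argument skips. Your final paragraph gestures at a case analysis but does not carry it out, and mislocates where $N=6$ enters.
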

\begin{proof} 
By \eqref{isom}, $\Lambda_k^{\sigma_\ell}=\Lambda_{k\ell}$. If $A(N)_{\mathbf Q(\zeta)}=\mathbf Q(\zeta)(\Lambda_1,j)$, then we can write $\Lambda_{k^{-1}}=F(\Lambda_1,j)$ for a rational function $F(X,Y)$ of $X$ and $Y$ with coefficients in $\mathbf Q(\zeta)$. By applying $\sigma_k$ to this equality, we have $\Lambda_1=F^{\sigma_k}(\Lambda_k,j)$, and $A(N)_{\mathbf Q(\zeta)}=\mathbf Q(\zeta)(\Lambda_k,j)$. Therefore we have only to prove the assertion in the case $k=1$. Let $k=1$ and $H$ the invariant subgroup of $\Lambda_{1}$ in $\rit{SL}_2(\mathbf Z)$.  Since $\Lambda_1\in A(N)_{\mathbf Q(\zeta)}$, it is sufficient to show $H\subset\Gamma(N)\{\pm E_2\}$. Let $A=\begin{pmatrix}a&b\\ c&d\end{pmatrix}\in H$, thus, $\Lambda_1\circ A=\Lambda_1$. Then by \eqref{transf} and \eqref{lam1},
\begin{equation}\label{eq3x}
\begin{split}(E&(\tau;a,b)-E(\tau;a+c,b+d))(E(\tau,0,1)-E(\tau,1,1))=\\
& (E(\tau,c,d)-E(\tau;a+c,b+d))(E(\tau;1,0)-E(\tau;1,1) ).
\end{split}
\end{equation}
From Proposition~\ref{prop1} it follows:
\begin{equation}\label{series1}
\begin{split}
&E(\tau,0,1)-E(\tau,1,1)\equiv\theta-\zeta q-2\zeta^{2}q^2 \mod q^3,\\
&E(\tau,1,0)-E(\tau,1,1)\equiv (1-\zeta)q+2(1-\zeta^{2})q^2 \mod q^3,
%&E(\tau,0,k)-E(\tau,1,k)\equiv\theta_k-\sum_{n=1}^{N-1}n\zeta^{kn}q^n-\zeta^{-k}q^{N-1}\mod q^N,\\
%&E(\tau,1,0)-E(\tau,1,k)\equiv\sum_{n=1}^{N-1}n(1-\zeta^{kn})q^n-(1-\zeta^{-k})q^{N-1}\mod q^N,
\end{split}
\end{equation}
where $\theta=\zeta/(1-\zeta)^2$.
 By considering the order of $q$-series in the both side of \eqref{eq3x}, it follows from Proposition~\ref{prop2} that
\begin{equation}\label{mineq}
\min(\{a\},\{a+c\})=\min(\{c\},\{a+c\})+1.
\end{equation}
This equality implies that $\br a,\br{a+c}\ne 0$.  At first we shall show that $c\equiv 0\mod N$. Let us assume that $\br{c}\ne 0$. We have three cases: (i) $\{a\}<\{a+c\}$, (ii) $\{a\}>\{a+c\}$, (iii) $\{a\}=\{a+c\}$. Let us consider the case (i). Then $\{c\}=\{a\}-1\ne 0$. Therefore $0<\br a,\br{c}<\br{a+c}\leq N/2$. By comparing the coefficient of $q^{\{a\}}$ of $q$-series in the both side of \eqref{eq3x}, from Proposition \ref{prop2} it follows that
\[\zeta^{\mu(a)b}\theta=\zeta^{\mu(c)d}(1-\zeta).\]
This gives $|1-\zeta|=1$, and $N=6$, which contradicts the assumption. In the case (ii), $\{c\}=\{a+c\}-1$. Therefore $0<\br{c}<\br{a+c}<\br{a}\leq N/2$. An argument similar to that in the case (i) gives that $N=6$. Now we deal with the case (iii).
Put $\{c\}=t$. Then $\{a\}=\{a+c\}=t+1\leq N/2$, and $t\ne 0,N/2$. Since $t\ne 0$, the equality $\{a\}=\{a+c\}$ implies that $c\equiv -2a\mod N,\mu(a)=-\mu(a+c)$. Therefore $t=2\{a\}$ (resp.$N-2\{a\}$) if $2\{a\}\leq N/2$ (resp.$2\{a\}>N/2$). The equality $\{a\}=t+1$ implies that  $t=N-2\{a\}$,thus $N=3t+2$. Hence $N\geq 5$ and $\{a\}\ne N/2$. From comparing the coefficient of $q^{t+1}$ of $q$-series in the both side of \eqref{eq3x}, from Proposition \ref{prop2} it follows that
\begin{equation}\label{xeq1}
\frac{\zeta}{(1-\zeta)^2}(\omega_1-\omega_3)=(1-\zeta)\omega_2,
\end{equation}
where $\omega_1=\zeta^{\mu(a)b},\omega_2=\zeta^{\mu(c)d},\omega_3=\zeta^{\mu(a+c)(b+d)}$. Therefore,
\[\zeta\omega_1\omega_2^{-1}\left(\frac{1-\omega_3\omega_1^{-1}}{1-\zeta}\right)=(1-\zeta)^2.\]
Let $N=5$. Then $(1-\zeta)$ is not a unit but by Lemma \ref{lem2},$\displaystyle\left(\frac{1-\omega_3\omega_1^{-1}}{1-\zeta}\right)$ is $0$ or a unit.
This gives a contradiction. Let $N\geq 6$. Then $t>1$ and noting that $t<N/2-1,2t-1,N-(t+3)$,  the following congruences are obtained from Proposition \ref{prop1}:
%\end{document}
\begin{equation}\label{series2}
\begin{split}
&E(\tau;a,b)-E(\tau;a+c,b+d)\equiv (\omega_1-\omega_3)q^{t+1} \mod q^{t+3},\\
&E(\tau;c,d)-E(\tau;a+c,b+d)\equiv \omega_2q^t-\omega_3q^{t+1} \mod q^{t+2},\end{split} 
\end{equation}

Therefore, comparing the coefficient of $q^{t+2}$ of $q$-series in \eqref{eq3x}, we have: 
\begin{equation*}
\zeta(\omega_1-\omega_3)=(1-\zeta)\omega_3-2(1-\zeta^{2})\omega_2.
\end{equation*}
From this, by using \eqref{xeq1}, it follows that $3+\zeta^{2}=\omega_3/\omega_2$. Therefore $|3+\zeta^{2}|=1$. However $|3+\zeta^{2}|>1$. This is a contradiction. Hence we obtain $c\modx 0N$. From \eqref{mineq}, it is deduced that $a\equiv d\modx{\pm 1}N$.
If necessary,by replacing $A$ by $-A$, we can assume that $A=\displaystyle{\begin{pmatrix}1&b\\0&1\end{pmatrix}}$. By \eqref{eq3x}, 
\begin{equation}
\begin{split}(E(\tau;1,b)-&E(\tau;1,b+1))(E(\tau,0,1)-E(\tau,1,1))=\\
& (E(\tau,0,1)-E(\tau;1,b+1))(E(\tau;1,0)-E(\tau;1,1) ).
\end{split}
\end{equation}
By comparing the coefficients of $q$, 
\[(\zeta^b-\zeta^{b+1})\theta=(1-\zeta)\theta.\]
This implies that $\zeta^b=1$. Hence we obtain $A\in\Gamma(N)$.
\end{proof}
\begin{thm}\label{cor31}
Let $\{Q_1,Q_2\}$ be a basis of the group $\mathbf Z/N\mathbf Z\oplus\mathbf Z/N\mathbf Z$.
 Then $A(N)_{\mathbf Q(\zeta)}={\mathbf Q(\zeta)}(\Lambda(\tau;Q_1,Q_2),j)$.
\end{thm}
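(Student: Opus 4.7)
The plan is to reduce Theorem \ref{cor31} directly to Proposition \ref{th1} via Proposition \ref{prop30}. By Proposition \ref{prop30}, there exist an integer $k$ prime to $N$ and a matrix $A\in\rit{SL}_2(\mathbf{Z})$ with $\Lambda(\tau;Q_1,Q_2)=\Lambda_k\circ A$. Assuming $N\neq 6$ (which is implicit here since Proposition \ref{th1} requires it; the case $N=6$ is addressed in Remark \ref{remex}), Proposition \ref{th1} gives $A(N)_{\mathbf{Q}(\zeta)}=\mathbf{Q}(\zeta)(\Lambda_k,j)$. It therefore suffices to show that replacing $\Lambda_k$ by $\Lambda_k\circ A$ on the right-hand side does not change the field.

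The key input is the Galois-theoretic statement cited from Lang's book (Chapter 6, Theorem 3) and recorded above: $A(N)_{\mathbf{Q}(\zeta)}/\mathbf{Q}(\zeta)(j)$ is Galois with group $\rit{SL}_2(\mathbf{Z})/\Gamma(N)\{\pm E_2\}$ acting by $f\mapsto f\circ B$. In particular, the assignment $\sigma_A:f\mapsto f\circ A$ is an automorphism of the field $A(N)_{\mathbf{Q}(\zeta)}$ fixing the subfield $\mathbf{Q}(\zeta)(j)$. Applying $\sigma_A$ to both sides of the equality $A(N)_{\mathbf{Q}(\zeta)}=\mathbf{Q}(\zeta)(\Lambda_k,j)$ then yields
\[
A(N)_{\mathbf{Q}(\zeta)}=\mathbf{Q}(\zeta)(\Lambda_k\circ A,\,j)=\mathbf{Q}(\zeta)(\Lambda(\tau;Q_1,Q_2),\,j),
\]
which is exactly the conclusion of the theorem.

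There is essentially no analytic obstacle at this point; the substantive work has already been done in Proposition \ref{th1}, where the hardest case analysis (comparing $q$-expansions and ruling out $N=6$) was carried out. The only subtlety worth verifying explicitly is that $\sigma_A$ preserves the rationality field $A(N)_{\mathbf{Q}(\zeta)}$ (rather than merely $A(N)$), but this is precisely part of the Galois description of $A(N)_{\mathbf{Q}(\zeta)}/\mathbf{Q}(\zeta)(j)$ invoked above. Equivalently, one can phrase the same argument in terms of minimal polynomials: $\Lambda_k$ and $\Lambda_k\circ A$ are Galois conjugates over $\mathbf{Q}(\zeta)(j)$, hence generate extensions of equal degree, and since one of them already generates all of $A(N)_{\mathbf{Q}(\zeta)}$, so does the other.
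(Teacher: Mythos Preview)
Your proof is correct and follows essentially the same route as the paper: reduce to $\Lambda_k$ via Proposition \ref{prop30}, then use Proposition \ref{th1} together with the fact that $f\mapsto f\circ A$ is an automorphism of $A(N)_{\mathbf Q(\zeta)}$ over $\mathbf Q(\zeta)(j)$ (the paper phrases this last step more tersely via the normality of $\Gamma(N)$ in $\rit{SL}_2(\mathbf Z)$ and the rationality statement \eqref{rationality}). Your explicit remark that $N\neq 6$ is an implicit hypothesis is also accurate.
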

\begin{proof}
By Proposition \ref{prop30}, there exists an integer $k$ prime to $N$ and an element $A\in \rit{SL}_2(\mathbf Z)$ such that $\Lambda(\tau;Q_1,Q_2)=\Lambda_k\circ A$.  Since $\Gamma (N)$ is a normal subgroup of $\rit{SL}_2(\mathbf Z)$, the assertion is deduced from \eqref{rationality} and Proposition \ref{th1}.
\end{proof}
\begin{rem}\label{remex}
Let $N=6$. Then the matrix $M=\begin{pmatrix}3&11\\1&4\end{pmatrix}\not\in\Gamma(6)$ fixes the function $\Lambda_1(\tau)$. This fact is proved as follows. Let us consider the function 
\[
\begin{split}
F(\tau)&=(E(\tau,1,0)-E(\tau,1,1))[M]_2(E(\tau,0,1)-E(\tau,1,1))-\\
&\phantom{aaa}(E(\tau,0,1)-E(\tau,1,1))[M]_2(E(\tau,1,0)-E(\tau,1,1))\\
&=(E(\tau,3,1)-E(\tau,2,3))(E(\tau,0,1)-E(\tau,1,1))-\\
&\phantom{aaa}(E(\tau,1,4)-E(\tau,2,3))(E(\tau,1,0)-E(\tau,1,1)).
\end{split}
\]
Here we used \eqref{fund} and \eqref{transf}. Then $F$ is a cusp form of weight 4 with respect to $\Gamma(6)$. If $F\ne 0$, then $F$ has $24$ zeros in the fundamental domain. See \cite{ogg}, III-6, Proposition 10.  Let $A=\begin{pmatrix}a&b\\c&d\end{pmatrix}\in\rit{SL}_2(\mathbf Z)$. Then the order of $F$ at the cusp $a/c=A(i\infty)$ is greater than or equal to minimum of two integers $\min(\{3a+c\},\br{2a+3c})+\min(\br c,\br{a+c})$ and $\min(\br{a+4c},\br{2a+3c})+\min(\br{a},\br{a+c})$.
It is easy to see that $F$ has at least $22$ zeros at cusps other than~ $i\infty$ and the coefficient of $q^2$ of the $q$-expansion of $F$ is $0$. This shows that $F$ has at least $25$ zeros. Hence $F=0$. 
\end{rem}
\section{Values of $\Lambda(\tau;Q_1,Q_2)$ at imaginary quadratic points}
In this section, we study values of $\Lambda(\tau;Q_1,Q_2)$ at imaginary quadratic points. In the case $N=2$, it is a well known that $2^4\lambda$ is integral over $\mathbf Z[j]$. For example see \cite{LA} 18, \S 6. We shall consider the case $N>2$.
\begin{lem}\label{lem3a} Let $k$ be an integer prime to $N$ and $A\in\rit{SL}_2(\mathbf Z)$. Let $A_k$ be a matrix of $\rit{SL}_2(\mathbf Z)$ such that 
$\displaystyle A_k\equiv \begin{pmatrix}a&bk^{-1}\\ck&d\end{pmatrix}\mod N$.
Then \[\Lambda_k\circ A=(\Lambda_1\circ A_k)^{\sigma_k}.\]
\end{lem}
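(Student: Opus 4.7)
The plan is to reduce everything to a direct calculation using the three facts we already have: the periodicity \eqref{fund}, the transformation formula \eqref{transf}, and the Galois action \eqref{isom}. Since $\Lambda_k$ is defined in \eqref{lam1} as a ratio of differences of the weight-$2$ forms $E(\tau;r,s)$, the automorphy factors cancel when we apply $[A]_2$ to numerator and denominator, so
\[
\Lambda_k\circ A=\frac{E(\tau;a,b)-E(\tau;a+ck,b+dk)}{E(\tau;ck,dk)-E(\tau;a+ck,b+dk)}
\]
by \eqref{transf}, where $A=\begin{pmatrix}a&b\\c&d\end{pmatrix}$. That is the right-hand side I want to reach.

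Next I would write out $\Lambda_1\circ A_{k}$ in the same way. Writing $A_k=\begin{pmatrix}a_k&b_k\\c_k&d_k\end{pmatrix}$ with $(a_k,b_k,c_k,d_k)\equiv(a,bk^{-1},ck,d)\bmod N$, the same computation gives
\[
\Lambda_1\circ A_k=\frac{E(\tau;a_k,b_k)-E(\tau;a_k+c_k,b_k+d_k)}{E(\tau;c_k,d_k)-E(\tau;a_k+c_k,b_k+d_k)},
\]
and by \eqref{fund} each $E$-value depends only on its pair of arguments modulo $N$, so I may replace $(a_k,b_k,c_k,d_k)$ by any integer representatives of their residues mod $N$. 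Choosing the representatives $(a,bk^{-1},ck,d)$ (where $bk^{-1}$ denotes any integer whose residue mod $N$ is the inverse class times $b$), I get
\[
\Lambda_1\circ A_k=\frac{E(\tau;a,bk^{-1})-E(\tau;a+ck,bk^{-1}+d)}{E(\tau;ck,d)-E(\tau;a+ck,bk^{-1}+d)}.
\]

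Finally I apply $\sigma_k$. By \eqref{isom}, $\sigma_k$ multiplies the second argument of every $E$ by $k$, so the second arguments become $bk^{-1}\cdot k\equiv b$, $(bk^{-1}+d)k\equiv b+dk$, and $dk$, all taken modulo $N$. One more invocation of \eqref{fund} identifies the resulting expression with the formula for $\Lambda_k\circ A$ displayed above, proving the lemma.

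No real obstacle is expected; the only point requiring attention is the bookkeeping with $bk^{-1}$, which is only defined modulo $N$. This is harmless because \eqref{fund} lets us freely replace each entry of $A_k$ by any congruent integer before or after applying $\sigma_k$, and because \eqref{isom} is compatible with this replacement (both sides depend only on the second argument mod $N$ once $k$ is multiplied in).
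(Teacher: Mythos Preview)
Your argument is correct and follows essentially the same route as the paper's proof: both compute $\Lambda_1\circ A_k$ via \eqref{transf}, apply $\sigma_k$ via \eqref{isom}, and then use the periodicity \eqref{fund} to identify the result with $\Lambda_k\circ A$. The paper is simply terser, writing $A_k=\begin{pmatrix}t&u\\v&w\end{pmatrix}$ and performing the substitution in one line without spelling out the bookkeeping with $bk^{-1}$.
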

\begin{proof}
Let $A_k=\begin{pmatrix}t&u\\v&w\end{pmatrix}$. Then
\[
\begin{split}
(\Lambda_1\circ A_k)^{\sigma_k}&=\frac{E(t,uk)-E(t+v,(u+w)k)}{E(v,wk)-E(t+v,(u+w)k)}\\
&=\frac{E(a,b)-E(a+ck,b+dk)}{E(ck,dk)-E(a+ck,b+dk)}\\
&=\Lambda_k\circ A.
\end{split}
\]
\end{proof}
\begin{prop}\label{prop3a} Let $N>2$ and $k$ be an integer prime to $N$. Then for any $A\in\rit{SL}_2(\mathbf Z)$, $(1-\zeta^k)^3\Lambda_k\circ A\in\mathbf Z[\zeta]((q))$.
\end{prop}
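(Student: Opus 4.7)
The plan is to reduce to $k=1$ via Lemma~\ref{lem3a}, then expand both numerator and denominator of $\Lambda_1\circ B$ using Proposition~\ref{prop2}, and finally verify the integrality of the leading coefficient by a case analysis that ultimately rests on Lemma~\ref{lem2}. Concretely, Lemma~\ref{lem3a} gives $\Lambda_k\circ A=(\Lambda_1\circ A_k)^{\sigma_k}$ for some $A_k\in \rit{SL}_2(\mathbf Z)$, and since $\sigma_k$ acts coefficientwise on $\mathbf Z[\zeta]((q))$ and sends $1-\zeta$ to $1-\zeta^k$, it is enough to show $(1-\zeta)^3(\Lambda_1\circ B)\in \mathbf Z[\zeta]((q))$ for every $B=\bigl(\begin{smallmatrix}a&b\\c&d\end{smallmatrix}\bigr)\in \rit{SL}_2(\mathbf Z)$.

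Writing $\Lambda_1\circ B$ out via \eqref{transf} and \eqref{lam1} and applying Proposition~\ref{prop2} to the numerator and denominator (which are both nonzero: $ad-bc=1$ together with $N>2$ prevents any of $(a,b)\equiv \pm(a+c,b+d)$ or $(c,d)\equiv \pm(a+c,b+d)\bmod N$, as each would force $\det B\equiv 0\bmod N$) yields
\[
\Lambda_1\circ B=\frac{\theta_N}{\theta_D}\,q^{m_N-m_D}\cdot\frac{1+qh_N}{1+qh_D}
\]
with $h_N,h_D\in \mathbf Z[\zeta][[q]]$, $m_N=\min(\{a\},\{a+c\})$ and $m_D=\min(\{c\},\{a+c\})$. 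The second factor lies in $\mathbf Z[\zeta][[q]]$ (since $(1+qh_D)^{-1}\in \mathbf Z[\zeta][[q]]$), so everything reduces to the algebraic claim $(1-\zeta)^3\theta_N/\theta_D\in \mathbf Z[\zeta]$. The $\theta$'s are the explicit expressions in Proposition~\ref{prop2}: products of powers of $\zeta$ (always units of $\mathbf Z[\zeta]$) with factors of the form $1-\zeta^e$.

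The remaining verification is a finite case analysis on the triple $(\{a\},\{c\},\{a+c\})$. Using $\det B=1$ and $N>2$, I will rule out every configuration in which two of these three symbols simultaneously equal $0$ or $N/2$ (an elementary mod-$N$ computation of the same flavour as in Remark~\ref{remex}). In every surviving configuration, the exponents $e$ appearing in the factors $1-\zeta^e$ contributed to the denominator of $\theta_N/\theta_D$ turn out to be coprime to $N$, so Lemma~\ref{lem2}(i) identifies each such factor with $1-\zeta$ up to a unit of $\mathbf Z[\zeta]$. The one case in which the exponent $3$ is sharp is $a\equiv 0\bmod N$, $c\not\equiv 0\bmod N$: then $-bc\equiv 1\bmod N$, so $b$ and $c$ are units mod $N$; Proposition~\ref{prop2} gives $\theta_N=\zeta^b/(1-\zeta^b)^2$ (from the subcase $\{r_1\}=0<\{r_2\}$) and $\theta_D=\zeta^{\mu(c)d}(1-\zeta^{\mu(c)b})$ (from the subcase $\{r_1\}=\{r_2\}\ne 0,N/2$), so $\theta_N/\theta_D=\varepsilon\,(1-\zeta)^{-3}$ for some $\varepsilon\in \mathbf Z[\zeta]^{\times}$. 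In every other surviving configuration a single factor $(1-\zeta)^{-1}$ already suffices. The main obstacle of the proof is simply the bookkeeping of the roughly one dozen subcases, each dispatched by a short calculation from Proposition~\ref{prop2} and Lemma~\ref{lem2}, together with the identification of the single sharp case $a\equiv 0\bmod N$ that fixes the exponent~$3$.
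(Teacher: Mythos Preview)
Your proposal is correct and follows essentially the same route as the paper: reduce to $k=1$ via Lemma~\ref{lem3a}, factor numerator and denominator through Proposition~\ref{prop2}, and then show $(1-\zeta)^3\theta_N/\theta_D\in\mathbf Z[\zeta]$ by a case analysis resting on Lemma~\ref{lem2}. The paper organizes the cases slightly differently (splitting on whether $\min(\{a\},\{a+c\})=0$ and whether $\{c\}=\{a+c\}$, with the subcase $\mu(c)=-\mu(a+c)$ giving $a\equiv -2c$ and the coprimality of $b+2d$ to $N$), but the content, the identification of $a\equiv 0\bmod N$ as the unique sharp case, and the use of Lemma~\ref{lem2}(i) are identical.
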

\begin{proof} 
By Lemma \ref{lem3a}, we have only to prove the assertion in the case $k=1$.
Put $A=\begin{pmatrix}a&b\\c&d\end{pmatrix}$. Proposition \ref{prop2} shows that
\begin{equation}
\begin{split}&E(\tau;a,b)-E(\tau;a+c,b+d)=\theta_1q^{t_1}(1+h_1(q)),\\
& E(\tau,c,d)-E(\tau;a+c,b+d)=\theta_2q^{t_2}(1+h_2(q)),
\end{split}
\end{equation}
where $t_i$ are non-negative integers, $\theta_i$ are non-zero elements of $\mathbf Q(\zeta)$ and $h_i\in\mathbf Z[\zeta][[q]]$ ($i=1,2$). This shows $\Lambda_{k}\circ A=\omega f(q)$, where $\omega=\theta_1/\theta_2$ and $f\in\mathbf Z[\zeta]((q))$. Therefore it is sufficient to prove that $(1-\zeta)^3\omega\in \mathbf Z[\zeta]$. By Proposition \ref{prop2}, if $\min(\br a,\br{a+c})\ne 0$ and $\br{c}\ne\br{a+c}$, then $\theta_1,\theta_2^{-1}\in \mathbf Z[\zeta]$. Therefore $\omega\in\mathbf Z[\zeta]$. Let $\br{c}=\br{a+c}$. If $\mu(c)=\mu(a+c)$,then $a\modx 0N$. This implies that $\rit{GCD}(c,N)=1$ and $\br a=0<\br{c}=\br{a+c}<N/2$. Therefore 
\[
\theta_1=\zeta^b/(1-\zeta^b)^2, \theta_2=\zeta^{\mu(c)d}-\zeta^{\mu(c)(b+d)},\]
and $\omega=\zeta^\ell/(1-\zeta^b)^3$ for an integer $\ell$. Since $\rit{GCD}(b,N)=1$, by (i) of Lemma \ref{lem2}, $(1-\zeta)^3\omega\in\mathbf Z[\zeta]$. Let $\mu(c)=-\mu(a+c)$. Then $a\modx {-2c}N$. Since $\rit{GCD}(a,c)=1$, $\rit{GCD}(c,N)=1$. It follows that $\{c\}\ne 0,N/2$ and $\br a,\br{a+c}\ne 0$. Therefore $\theta_1\in\mathbf Z[\zeta]$ and $\theta_2=\zeta^{\mu(c)d}(1-\zeta^{-\mu(c)(b+2d)})$. Let $\rit{GCD}(b+2d,N)=D$, then $b\modx{-2d}D,a\modx{-2c}D$. It follows that $1=ad-bc\modx 0D$. This shows $b+2d$ is prime to $N$. Lemma~\ref{lem2} shows that $(1-\zeta)\omega\in\mathbf Z[\zeta]$. Let $\min(\br a,\br{a+c})=0$ and $\br{a+c}\ne \br{c}$. Then $\br{a+c}=0$ and $\br a, \br{c}\ne 0$. Therefore $0=\br{a+c}<\br a,\br{c}$, and $\theta_1=\theta_2$, thus $\omega=1$.\end{proof} 
Let $C_2=2^4$ and for $N>2$ put
\[
\begin{split}
C_N=\begin{cases} p^2&\text{ if }N=p^\ell (~p=2,3),\\
                  p&\text{ if }N=p^\ell (~p:\text{a prime number}>3),\\
                  1&\text{ if $N$ is not a power of a prime number}.
\end{cases}
\end{split}
\]

\begin{cor}\label{cor3a}
Let $N>2$ and $k$ be an integer prime to $N$. 
Then $C_N\Lambda_k\circ A\in\mathbf Z[\zeta]((q))$ for any $A\in\rit{SL}_2(\mathbf Z)$.
\end{cor}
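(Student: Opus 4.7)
The plan is to bootstrap off Proposition~\ref{prop3a}, which already furnishes $(1-\zeta^k)^3\Lambda_k\circ A\in\mathbf Z[\zeta]((q))$. Hence it will suffice to prove $C_N/(1-\zeta^k)^3\in\mathbf Z[\zeta]$, since multiplying by this scalar immediately yields $C_N\Lambda_k\circ A\in\mathbf Z[\zeta]((q))$.

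Because $\gcd(k,N)=1$, Lemma~\ref{lem2}(i) applied in both directions shows that $1-\zeta$ and $1-\zeta^k$ are associates in $\mathbf Z[\zeta]$, so the question reduces to whether $C_N/(1-\zeta)^3\in\mathbf Z[\zeta]$. If $N$ is not a prime power, Lemma~\ref{lem2}(ii) (taking $k=1$, $\delta=1$) makes $1-\zeta$ a unit; the corollary then holds trivially with $C_N=1$.

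For $N=p^\ell$ a prime power, I will invoke the standard ramification identity $(p)=(1-\zeta)^{\phi(N)}$ of ideals in $\mathbf Z[\zeta]$, where $\phi(N)=(p-1)p^{\ell-1}$, and then check $v(C_N)\geq 3$ in each case of the definition of $C_N$, writing $v$ for the valuation at the prime $(1-\zeta)$. For $p=2$, forcing $\ell\geq 2$ since $N>2$, one has $v(4)=2\phi(N)=2^{\ell}\geq 4$; for $p=3$, $v(9)=2\phi(N)=4\cdot 3^{\ell-1}\geq 4$; and for $p>3$, $v(p)=\phi(N)=(p-1)p^{\ell-1}\geq 4$. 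In every case $v(C_N)\geq 3$, so $C_N/(1-\zeta)^3\in\mathbf Z[\zeta]$ as required.

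There is no serious obstacle beyond tracking this ramification. The piecewise definition of $C_N$ has been calibrated precisely so that $v(C_N)\geq 3$ in all prime-power cases: the boundary cases $N=4$ and $N=3$, where $\phi(N)=2$, are exactly the ones that force the quadratic multipliers $4$ and $9$ rather than the more economical $2$ and $3$ available when $\phi(N)\geq 3$.
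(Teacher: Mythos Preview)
Your proof is correct and follows the same strategy as the paper: invoke Proposition~\ref{prop3a} and reduce to showing $C_N/(1-\zeta^k)^3\in\mathbf Z[\zeta]$. The paper's proof is a one-liner that attributes this divisibility to Lemma~\ref{lem2}; you have simply unpacked what that entails, including the standard ramification identity $(p)=(1-\zeta)^{\phi(N)}$ for prime-power $N$, which is implicit in the paper's appeal to Lemma~\ref{lem2} but not literally contained in it.
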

\begin{proof}
 Lemma~\ref{lem2} implies that $C_N/(1-\zeta^k)^3\in\mathbf Z[\zeta]$. The assertion follows from Proposition \ref{prop3a}. 
\end{proof}

\begin{thm}\label{thm3a}
 Let $\{Q_1,Q_2\}$ be a basis of the group $\mathbf Z/N\mathbf Z\oplus\mathbf Z/N\mathbf Z$. Then the function $C_N\Lambda(\tau;Q_1,Q_2)$ is integral over $\mathbf Z[j]$. Further Let $\theta$ be an element of the complex upper half plane such that $\mathbf Q(\theta)$ is an imaginary quadratic field. Then $C_N\Lambda(\theta;Q_1,Q_2)$ is an algebraic integer.
\end{thm}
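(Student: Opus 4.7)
The plan is to reduce to showing $C_N\Lambda_k$ is integral over $\mathbf{Z}[j]$ for each integer $k$ prime to $N$, and to exhibit a monic polynomial over $\mathbf{Z}[j]$ annihilating it via its Galois orbit under $\rit{SL}_2(\mathbf{Z})$. First I would use Proposition~\ref{prop30} to write $\Lambda(\tau;Q_1,Q_2)=\Lambda_k\circ A$ for some $A\in\rit{SL}_2(\mathbf{Z})$ and some $k$ coprime to $N$. Since $j$ is $\rit{SL}_2(\mathbf{Z})$-invariant, the right action of $\rit{SL}_2(\mathbf{Z})$ on $A(N)$ fixes $\mathbf{Z}[j]$ pointwise, so integrality over $\mathbf{Z}[j]$ is preserved under $f\mapsto f\circ A$; it therefore suffices to treat $C_N\Lambda_k$ itself.

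Let $R$ be a complete set of coset representatives for $\{\pm E_2\}\Gamma(N)\backslash\rit{SL}_2(\mathbf{Z})$ and form the monic polynomial
\[
P(X)\;=\;\prod_{B\in R}\bigl(X-C_N\Lambda_k\circ B\bigr).
\]
Its coefficients are elementary symmetric functions in the conjugates of $C_N\Lambda_k$, so they are $\rit{SL}_2(\mathbf{Z})$-invariant modular functions, holomorphic on $\mathcal H$ because each $\Lambda_k\circ B$ is (the $E$-differences appearing have no zero or pole on the upper half plane, as noted preceding Lemma~\ref{lem2}). By Corollary~\ref{cor3a}, each $C_N\Lambda_k\circ B$ lies in $\mathbf{Z}[\zeta]((q))$, so each coefficient of $P$ lies in $\mathbf{Z}[\zeta]((q))$ as well.

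The key step is then the $q$-expansion principle at level one: any $\rit{SL}_2(\mathbf{Z})$-invariant function that is holomorphic on $\mathcal H$ and whose $q$-expansion at $i\infty$ lies in $\mathbf{Z}[\zeta]((q))$ is a polynomial in $j$ with coefficients in $\mathbf{Z}[\zeta]$. Indeed, invariance under $\tau\mapsto\tau+1$ forces the expansion to be a Laurent series in $q^N=\exp(2\pi i\tau)$, and then leading terms can be removed inductively by monomials $c\cdot j^m$ with $c\in\mathbf{Z}[\zeta]$, using the integral $q$-expansion $j=q^{-N}+744+\cdots$. Consequently $P(X)\in(\mathbf{Z}[\zeta][j])[X]$, so $C_N\Lambda_k$ is integral over $\mathbf{Z}[\zeta][j]$, and since $\mathbf{Z}[\zeta]$ is integral over $\mathbf{Z}$, also over $\mathbf{Z}[j]$.

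For the value at an imaginary quadratic point $\theta$, I would invoke the classical theorem of complex multiplication that $j(\theta)$ is an algebraic integer. Specializing the monic relation $P(C_N\Lambda(\tau;Q_1,Q_2))=0$ at $\tau=\theta$ yields a monic polynomial with coefficients in $\mathbf{Z}[j(\theta)]$ vanishing at $C_N\Lambda(\theta;Q_1,Q_2)$; since $\mathbf{Z}[j(\theta)]$ is integral over $\mathbf{Z}$, the value is an algebraic integer. The main obstacle was constructing the integral annihilating polynomial, and that has already been essentially discharged in Sections~2 and~3 via Corollary~\ref{cor3a}; the remainder is a routine application of the level-one $q$-expansion principle together with CM theory.
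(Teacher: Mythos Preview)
Your argument is essentially the same as the paper's: form the Galois-orbit polynomial $P(X)=\prod_B(X-C_N\Lambda_k\circ B)$, use Corollary~\ref{cor3a} together with $\rit{SL}_2(\mathbf Z)$-invariance and the $q$-expansion principle to conclude $P\in\mathbf Z[\zeta][j][X]$, then pass to $\mathbf Z[j]$ by integrality of $\mathbf Z[\zeta]$ and specialize at $\theta$ using that $j(\theta)$ is an algebraic integer. One small slip: specializing $P$ itself gives coefficients in $\mathbf Z[\zeta,j(\theta)]$, not $\mathbf Z[j(\theta)]$---but this is still a ring of algebraic integers (or alternatively use the monic annihilator over $\mathbf Z[j]$ whose existence you already established), so the conclusion stands.
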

\begin{proof}
For $N=2$, the assertion has already proved. Let $N>2$. For an integer $k$ prime to $N$, let us consider a polynomial of $X$:
\[
\Psi_k(X)=\prod_{A}(X-C_N\Lambda_k\circ A),
\]
where $A$ runs over all representatives of $\rit{SL}_2(\mathbf Z)/\Gamma(N)\{\pm E_2\}$. Then each coefficient of $\Psi_k(X)$ is belong to $\mathbf Z[\zeta]((q))$ and is $\rit{SL}_2(\mathbf Z)$-invariant, and has no poles in the complex half plane. Therefore $\Psi_k(X)$ is a monic polynomial with coefficients in $\mathbf Z[\zeta][j]$. Since $C_N\Lambda_k\circ A$ is a root of $\Psi_k(X)=0$,  $C_N\Lambda_k\circ A$ is integral over $\mathbf Z[\zeta][j]$. From Proposition \ref{prop30} and the fact that $\mathbf Z[\zeta][j]$ is integral over $\mathbf Z[j]$, it follows that $C_N\Lambda(\tau;Q_1,Q_2)$ is integral over $\mathbf Z[j]$. Since $j(\theta)$ is an algebraic integer (see \cite{C1},Theorem 10.23) and $C_N\Lambda(\theta;Q_1,Q_2)$ is integral over $\mathbf Z[j(\theta)]$, $C_N\Lambda(\theta;Q_1,Q_2)$ is an algebraic integer. 
\end{proof}
\begin{thm}\label{th4b}
Let $N\ne 6$ and $\{Q_1,Q_2\}$ be a basis of the group $\mathbf Z/N\mathbf Z\oplus\mathbf Z/N\mathbf Z$. Let $\theta$ be an element of the complex upper half plane such that $\mathbf Z[\theta]$ is the maximal order of an imaginary quadratic field $K$. Then  the ray class field $\R_N$ of $K$ modulo $N$ is generated by $\Lambda(\theta;Q_1,Q_2)$ and $\zeta$ over the Hilbert class field $K(j(\theta))$ of $K$.
\end{thm}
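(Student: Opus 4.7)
The plan is to derive the theorem from Theorem \ref{cor31} via the main theorem of complex multiplication. The essential input is the classical result (see Lang, \emph{Elliptic Functions}, Ch.~10, or Shimura, \emph{Introduction to the Arithmetic Theory of Automorphic Functions}, Ch.~6) that for $\theta$ generating the maximal order $O_K$ with $\operatorname{Im}(\theta)>0$, the ray class field $\R_N$ of $K$ modulo $N$ is precisely the subfield of $\mathbf{C}$ generated over $K$ by the values $\{f(\theta) : f\in A(N)_{\mathbf{Q}(\zeta)}\text{ and }f\text{ is finite at }\theta\}$. In particular, $K(j(\theta))\subset \R_N$ (the Hilbert class field of $K$), and $\zeta\in \R_N$, since the $N$-th cyclotomic field is contained in any ray class field of conductor $N$ by global class field theory.

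The inclusion $K(j(\theta))(\Lambda(\theta;Q_1,Q_2),\zeta)\subset \R_N$ is then immediate: by \eqref{rationality} we have $\Lambda(\tau;Q_1,Q_2)\in A(N)_{\mathbf{Q}(\zeta)}$, and this function is holomorphic on the entire upper half-plane, since the denominator of \eqref{genlam1} is a difference of $\wp$-values that never vanishes there (as noted in the paragraph preceding Lemma \ref{lem2}). Hence $\Lambda(\tau;Q_1,Q_2)$ is finite at $\theta$, so $\Lambda(\theta;Q_1,Q_2)\in \R_N$ by the reciprocity result quoted above.

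For the reverse inclusion, I would invoke Theorem \ref{cor31}: since $N\ne 6$, we have $A(N)_{\mathbf{Q}(\zeta)}=\mathbf{Q}(\zeta)(\Lambda(\tau;Q_1,Q_2),j)$. Hence any $f\in A(N)_{\mathbf{Q}(\zeta)}$ finite at $\theta$ satisfies $f(\theta)\in \mathbf{Q}(\zeta)(\Lambda(\theta;Q_1,Q_2),j(\theta))$; here one uses the standard fact that evaluation at a point of the modular curve $X(N)$ is intrinsically defined on the local ring, independent of the particular rational expression chosen in the generators $\Lambda$ and $j$. Forming the compositum over all such $f$ with $K$ gives $\R_N\subset K(j(\theta))(\Lambda(\theta;Q_1,Q_2),\zeta)$, which combined with the forward inclusion yields the asserted equality.

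The only real obstacle is the correct invocation of the CM reciprocity law; once that tool is accepted, the deduction from Theorem \ref{cor31} is essentially formal. A minor technical point—establishing that $f(\theta)\in\mathbf{Q}(\zeta)(\Lambda(\theta;Q_1,Q_2),j(\theta))$ regardless of the representation of $f$—is handled by the standard observation that the evaluation map factors through the residue field of the CM point on $X(N)$.
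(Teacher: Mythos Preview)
Your argument is correct and follows essentially the same route as the paper: the paper's proof is a one-line appeal to Gee's Theorems~1 and~2 (Shimura's reciprocity law) together with Theorem~\ref{cor31}, and you have simply unpacked that appeal, citing Lang/Shimura directly for the CM input instead of Gee. The only extra care you take---noting that evaluation at the CM point is well defined on the local ring, so that $f(\theta)$ really lies in $\mathbf Q(\zeta)(\Lambda(\theta;Q_1,Q_2),j(\theta))$---is a genuine technical point that the paper leaves implicit in its citation.
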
 
\begin{proof}
The assertion is deduced from Theorems 1 and 2 of \cite{GA} and Theorem \ref{cor31}.
\end{proof}
\begin{rem}\label{rem41}
 Let $k$ and $\ell$ be integers such that $0<k\ne \ell<N/2,\rit{GCD}(k+\ell,N)=1$. We consider a function
\[
\Lambda_{k,\ell}^*(\tau)=\frac{\wp(\frac k N;L_\tau)-\wp(\frac{k+\ell}N;L_\tau)}{\wp(\frac \ell N;L_\tau)-\wp(\frac{k+\ell}N;L_\tau)}.  
\]
This is a modular function with respect to the group
\[
\Gamma_1(N)=\left\{\left. \begin{pmatrix} a & b \\ c & d \end{pmatrix}\in \rit{SL}_2(\mathbf Z)~\right |~ a-1\equiv c \equiv 0 \mod N \right\}.
\]
 In Corollary 1 of \cite{IN} we show that $\Lambda_{k,\ell}^*$ and $j$ generate the function field rational over $\mathbf Q(\zeta)$ with respect to $\Gamma_1(N)$ . Let the notation be the same as in Theorem \ref{th4b}. From Corollary 3  and  Theorem 4 of \cite{IN}, we obtain that $\R_N$ is generated by $\Lambda_{k,\ell}^*(\theta)$ and $\zeta$ over the Hilbert class field of $K$ and that $\Lambda_{k,\ell}^*(\theta)$ is an algebraic integer under an additional assumption $\rit{GCD}(k(k+2\ell),N)=1$.
\end{rem}


\begin{thebibliography}{99}
\bibitem{C1} D.Cox, {\it Primes of the form $x^2+ny^2$}, A Wiley-Interscience Publication, John Wiley and Sons,Inc.,New York,1989
\bibitem{GA} A.Gee,{\it Class invariants by Shimura's reciprocity law}, J.Th{\'e}or. Nombres \newline Bordeaux 11 (1999),45--72.
%\bibitem{GAS} A.Gee and P.Stevenhagen, {\it Generating class fields using Shimura reciprocity}, Algorithmic number theory, Springer LNCS 1423 (1998), 441--453.
\bibitem{II} N.Ishida and N.Ishii, {\it Generators and defining equation of the modular function field of the group $\Gamma_1(N)$}, Acta Arith. 101.4 (2002),303--320.
\bibitem{IN} N.Ishii, {\it Special values of generalized $\lambda$ functions at imaginary quadratic points}, to appear in Ramanujan J.,DOI 10.1007/s11139-013-9463-5
.
\bibitem{IK} N.Ishii and M.Kobayashi, {\it Singular values of some modular functions}, Ramanujan J.24 (2011), 67-- 83,DOI 10.1007/s11139-010-9249-y.
\bibitem{LA}
S.Lang, {\it Elliptic Functions}, Addison-Wesley,London,1973.
\bibitem{ogg} A.Ogg, {\it Modular forms and Dirichlet Series}, Benjamin 1969.
\bibitem{SG}
G.Shimura, {\it Introduction to the Arithmetic Theory of Automorphic Functions}, Iwanami-Shoten and Princeton University Press,1971.
\end{thebibliography}
\end{document}